 \newtheorem{theorem}{Theorem}
\newtheorem{lemma}[theorem]{Lemma}
\newtheorem{proposition}[theorem]{Proposition}
 \theoremstyle{remark}
\theoremstyle{definition}
\numberwithin{theorem}{section}
\numberwithin{equation}{section}
 \newcommand{\tri}{\mathcal{T}}
 \newcommand{\R}{\mathbb{R}}
 \newcommand{\osc}{\mathrm{osc}}
 \newcommand{\cor}{\mathcal{C}}
 \newcommand{\energy}{\mathcal{E}}
 \newcommand{\corv}{V_H}
 \newcommand{\vS}{v_H}
 \newcommand{\uH}{u_H}
 \newcommand{\vH}{v_H}
 \newcommand{\vh}{v_h}
 \newcommand{\new}[1]{#1}
\begin{document}

\author{Daniel Peterseim\thanks{Institut f\"ur Numerische Simulation, 
            Universit\"at Bonn, 
            Wegelerstra{\ss}e 6, D-53115 Bonn, Germany;
             peterseim@ins.uni-bonn.de,
             schedensack@ins.uni-bonn.de}
         \and Mira Schedensack\footnotemark[3]} 
\title{Relaxing the CFL condition for the wave equation on adaptive 
       meshes\thanks{The final publication is available at link.springer.com.}
        \thanks{D.~Peterseim gratefully acknowledges support by the Hausdorff 
                Center for Mathematics Bonn and by Deutsche Forschungsgemeinschaft in the 
                Priority Program 1748 ``Reliable simulation techniques in solid mechanics: 
                Development of non-standard discretization methods, mechanical 
                and mathematical analysis'' under the project
                ``Adaptive isogeometric modeling of propagating strong 
                discontinuities in heterogeneous materials''.
                The authors would like to thank Andreas Longva for pointing out 
                that mass lumping indeed works.
                Parts of this paper were written while the authors enjoyed the 
                kind hospitality of the Hausdorff Institute for Mathematics (Bonn).}
          }

\date{}
 
 \maketitle
 
\begin{abstract}
\noindent
The Courant-Friedrichs-Lewy (CFL) condition guarantees the 
stability of the popular explicit leapfrog method for the wave equation.
However, it limits the choice of the time step size to be bounded by the minimal 
mesh size in the spatial finite element mesh. This essentially prohibits 
any sort of adaptive mesh refinement that would be required to reveal optimal 
convergence rates on domains with re-entrant corners.
This paper shows how a simple subspace projection step inspired by 
numerical homogenisation can remove the critical time step restriction so that 
the CFL condition and approximation properties are balanced in an 
optimal way, even in the presence of spatial singularities.
\end{abstract}
 
\noindent
{\small\textbf{Keywords} CFL condition, hyperbolic equation, 
finite element method, adaptive mesh refinement
}

\noindent
{\small\textbf{AMS subject classification}
65M12, 
65M60, 
35L05 
}

\section{Introduction}

We consider the discretisation of the wave equation
\begin{equation}\label{e:strongform}
\begin{aligned}
 \ddot{u} - \Delta u &= f &&\text{in }(0,T)\times\Omega,\\
 u(0) &= u_0 \qquad&&\text{in }\Omega,\\
 \dot{u}(0)&=v_0 &&\text{in }\Omega,\\
 u\vert_{\partial\Omega} &= 0 &&\text{in }(0,T)
\end{aligned}
\end{equation}
on a polygonal, bounded Lipschitz
domain $\Omega\subseteq\R^d$, $d\in\{2,3\}$, with (possibly) re-entrant corners. 
This typically 
reduces the regularity of the solution and leads to $u(t)\not\in H^2(\Omega)$. 
To reveal optimal convergence rates, non-uniform mesh refinement in space 
proves advantageous for the wave equation~\cite{MuellerSchwab2015}.

The spatial discretisation with linear finite elements (or any other suitable 
Ritz-Galerkin method) based on some regular 
triangulation $\tri_h$ of $\Omega$ turns problem~\eqref{e:strongform} into a system of ordinary 
differential equations. 
Explicit central difference schemes are very popular 
for the discretisation of this time-dependent system. 
In the context of $H^1$-conforming finite elements, explicit means that the 
scheme avoids the expensive application of the inverse finite element stiffness 
matrix in every time step as it would be required by implicit backward differences. 
Only the mass matrix, which is well-conditioned after suitable diagonal scaling,
needs to be inverted. In particular, no sort of advanced multi-level 
preconditioning is necessary. This is one reason for the wide use of central 
differences. Another reason is that 
the symmetry of central differences leads to conservation of the 
inherent energy of the problem. Among the most simple and successful schemes of 
this type is the 
leapfrog, also known as second order explicit Newmark's scheme and 
St\"ormer-Verlet method. 

As usual for explicit time discretisation schemes, the numerical stability is 
conditional and guaranteed only under the sharp 
Courant-Friedrichs-Lewy (CFL) condition~\cite{CourantFriedrichsLewy1927}. 
In the present context of linear finite elements, it bounds the time step 
size by the minimal mesh size of the spatial mesh
\begin{align*}
  \Delta t\lesssim h_\mathrm{min}\,.
\end{align*}
While on (quasi-)uniform meshes the admissible choice 
$\Delta t \approx h_\mathrm{min}\approx h_\mathrm{max}$ 
is considered as a natural balance of space and time discretisation, 
the CFL condition is not at all compatible with non quasi-uniform meshes in the sense 
that the efficiency of adaptive mesh refinement in space causes 
tiny time steps that destroy the overall complexity. Essentially, the CFL condition 
forbids any type of spatial adaptivity.

The aim of this paper is to show that this phenomenon is a consequence of the 
high flexibility of adaptive finite elements. The restriction of the time step 
by the minimal spatial mesh size can easily be removed by projecting the 
adaptive finite element space to some subspace $\corv$ with similar (optimal)
approximation properties for weak solutions of the wave equation under the  
moderate regularity assumption $\ddot{u}(t)\in L^2(\Omega)$ for almost all $t$. 
The underlying technique is well-established in the context of numerical 
homogenisation~\cite{MalqvistPeterseim2014_localization}, also for 
a semi-discrete wave equation~\cite{AbdulleHenning2015}.
The reduced 
space $\corv$ allows for an improved inverse inequality that decouples the time step 
from the minimal mesh size
and turns the leapfrog into a feasible numerical 
scheme also on adaptive spatial meshes.
\new{The basis functions of the reduced space $\corv$ have to be computed 
and do not have in general a local support. These additional costs 
can be reduced by a localisation approach, see Section~\ref{s:praxis}.
Moreover, in the numerical experiment in Section~\ref{s:numerics},
the combination of the proposed method with mass lumping still shows 
the optimal convergence rate. This turns the method in a fully explicit
scheme.}

Another approach for avoiding (global) fine time step sizes consists in 
a combination of fine time step sizes in regions with small spatial 
elements and of larger time step sizes in regions with coarser spatial 
elements. This approach was introduced in \cite{DiazGrote2009} and is
motivated by small geometric features.
It seems to 
work very well in the case of locally isolated refinement and essentially two 
separate spatial discretisation scales. 
However, adaptive triangulations arising from spatial singularities are 
typically graded towards the singularity 
and encounter an ever increasing number of spatial discretisation scales. Although the  
generalisation of local time stepping to this case with an increasing number 
of different time step sizes 
is possible \cite{DiazGrote2015}, its realisation is certainly challenging 
and its behaviour with regard to stability and computational 
complexity is still open in such scenarios. Our aim is to provide an 
alternative approach for the stabilisation of explicit time stepping that is 
based on reduction of spatial degrees of freedom
rather than enriching the temporal discretisation.

Other approaches to overcome a strong CFL condition is the 
locally implicit method analysed in~\cite{HochbruckSturm2015}, which 
combines an explicit method with an implicit method in the region, 
where the mesh-size is small, and the singular complement 
method~\cite{CiarletHe2003}, which adds singular functions to the 
standard ansatz space.

\smallskip
The remaining parts of this paper are organised as follows. 
Section~\ref{s:idearelax} defines a generalised finite element space and proves 
optimal approximation properties and the improved inverse inequality 
in Lemma~\ref{l:approxinverse}. 
Section~\ref{s:discretisation} introduces the discretisation of the wave 
equation and states an error estimate. Section~\ref{s:praxis} discusses some 
practical aspects and generalisations of the method, while Section~\ref{s:numerics} concludes 
the paper with a numerical experiment.

\smallskip 
Standard notation on Lebesgue and Sobolev spaces is employed throughout the 
paper and $\|\bullet\|:=\|\bullet\|_{L^2(\Omega)}$ abbreviates the $L^2$~norm 
over $\Omega$, while $(\bullet,\bullet)_{L^2(\Omega)}$ denotes the 
$L^2$~scalar product. The notation $\bullet$ abbreviates the identity mapping.
The space $L^2(0,T;X)$ denotes the space of Bochner square integrable 
functions from $[0,T]$ with values in $X$. The dual pairing between 
$f\in H^{-1}(\Omega)$ and $v\in H^1_0(\Omega)$ is denoted by 
$\langle f,v\rangle_{H^{-1}(\Omega)\times H^1_0(\Omega)}$.
The symbol $C$ denotes a generic constant which is independent of the 
mesh size.

\section{Spatial reduction}\label{s:idearelax}

This section recalls the CFL condition \new{for the leapfrog discretisation
from Section~\ref{s:discretisation} below} in the context of adaptive (spatial)
finite elements and presents our novel reduction technique.

\subsection{CFL condition, inverse inequality and approximation}
\label{ss:inverseIneq}

Given a shape regular triangulation $\tri_h$, let $S^1_0(\tri_h)$ denote the standard 
$P_1$-FEM space of $\tri_h$-piece\-wise affine and globally continuous functions, which 
vanish on $\partial\Omega$.
The precise CFL condition for the leapfrog discretisation with underlying 
finite element space $S^1_0(\tri_h)$ reads
\begin{align}\label{e:CFLintro}
  \Delta t \leq \frac{\sqrt{2}}{C_\mathrm{inv}(S^1_0(\tri_h))},
\end{align}
where $C_\mathrm{inv}(S^1_0(\tri_h))$ is the best constant in the inverse inequality
in $S^1_0(\tri_h)$, i.e.,
\begin{align*}
  \|\nabla \vh\| \leq C_\mathrm{inv}(S^1_0(\tri_h)) \| \vh\|
   \qquad\text{for all }\vh\in S^1_0(\tri_h).
\end{align*}
In other words, the inverse inequality constant 
$C_\mathrm{inv}(S^1_0(\tri_h))^2$ is the maximal Rayleigh quotient 
$ \|\nabla \vh\|^2_{L^2(\Omega)}/\| \vh\|_{L^2(\Omega)}^2$ amongst all shape 
functions $v_h\in S^1_0(\tri_h)$ and, hence,  $C_\mathrm{inv}(S^1_0(\tri_h))^2$ 
is the largest eigenvalue of the discrete Laplacian in the sense of finite 
elements; see also Subsection~\ref{ss:eigenvalues} below. 
It is well known and easy to see that it
scales like the reciprocal of the minimal mesh size $h_\mathrm{min}$ of the 
underlying finite element mesh $\tri_h$, 
$$C_\mathrm{inv}(S^1_0(\tri_h))\leq C h_\mathrm{min}^{-1}.$$ 
Thus, the CFL condition~\eqref{e:CFLintro} says that the time step must not exceed some fixed 
multiple of the minimal spatial mesh size.
This 
suggests the use of quasi-uniform meshes in space. However, in the presence 
of singularities, quasi-uniform meshes $\tri_H$ with mesh size $H$ 
lead to the suboptimal best approximation error 
\begin{align}\label{e:approxVH}
  \inf_{\vS\in S^1_0(\tri_H)} \|(u-\vS)\|_{H^1(\Omega)}
   \leq C H^\delta\|\Delta u\|
 \quad \text{for all }u\in V\text{ with }\Delta u\in L^2(\Omega),
\end{align}
where $V:=H^1_0(\Omega)$ and $\delta<1$ depends on the domain $\Omega$.

The following Subsection~\ref{ss:constructioncorv} constructs a 
generalised finite element space $\corv$ with 
$\mathrm{dim}(\corv)=\mathrm{dim}(S^1_0(\tri_H))$ such that the 
(quasi-uniform) mesh size $H$ of $\tri_H$ satisfies simultaneously the (optimal) 
approximation property~\eqref{e:approxVH} with $\delta=1$ and the inverse 
inequality 
\begin{align}\label{e:inverseineq}
 \|\nabla v_H\|
 \leq C_\mathrm{inv}(\corv) \|v_H\|
 \qquad\text{for all }v_H\in\corv
\end{align}
with $C_\mathrm{inv}(\corv)\leq C H^{-1}$.
Provided $\Delta t \leq \widetilde{C} H$, this allows for the stability of explicit 
time stepping schemes without losing optimal approximation properties in space.

\subsection{Construction of reduced space}\label{ss:constructioncorv}

We consider a quasi-uniform shape regular triangulation $\tri_H$ with (maximal) mesh size $H$ 
and some (possibly) non-quasi-uniform shape regular triangulation and refinement $\tri_h$ of $\tri_H$ with 
corresponding finite element spaces $S^1_0(\tri_H)$ and $S^1_0(\tri_h)$ and with 
approximation property
\begin{align}\label{e:approxVh}
   \inf_{\vh\in S^1_0(\tri_h)} \|u-\vh\|_{H^1(\Omega)}
   \leq C H\|\Delta u\|
 \quad \text{for all }u\in V\text{ with }\Delta u\in L^2(\Omega).
\end{align}
The construction of the generalised finite element space is based on a projective 
quasi-interpolation operator $I_H:V\to S^1_0(\tri_H)$ with  
approximation and stability properties 
\begin{align}\label{e:IHapproxstab}
 \|H^{-1}(v-I_H v)\|
   + \|\nabla I_H v\|
  \leq C_{I_H}^{(1)} \|\nabla v\|
  \quad\text{for all }v\in V
\end{align}
and the $L^2$~stability
\begin{align}\label{e:IHL2stab}
 \|I_H v\|
   \leq C_{I_H}^{(0)} \|v\|
   \quad\text{for all }v\in V.
\end{align}
While~\eqref{e:IHapproxstab} is a standard property of quasi-interpolations,
the $L^2$-stability~\eqref{e:IHL2stab} is not, e.g., the Scott-Zhang 
quasi-interpolation~\cite{ScottZhang1990} is not $L^2$ stable.
For an admissible projective quasi-interpolation, which satisfies 
both~\eqref{e:IHapproxstab} and~\eqref{e:IHL2stab}, one may think of 
the $L^2$ projection onto $S^1_0(\tri_H)$, which is $H^1$ stable on 
quasi-uniform meshes~\cite{BankDupont1981}.
Another example for this Cl\'ement-type quasi-interpolation is given in 
Subsection~\ref{ss:exampleIH} below. 
Denote the kernel of $I_H$ as $W_h:=\operatorname{ker}(I_H\vert_{S^1_0(\tri_h)})\subseteq S^1_0(\tri_h)$.
Given $\vS\in S^1_0(\tri_H)$, define the projection $\cor \vS\in W_h$ of $\vS$ 
onto $W_h$ by 
\begin{align}\label{e:corprob}
  (\nabla w_h,\nabla \cor \vS)_{L^2(\Omega)}
  = (\nabla w_h,\nabla \vS)_{L^2(\Omega)}
 \qquad\text{for all }w_h\in W_h.
\end{align}
Section~\ref{s:praxis} below discusses the efficient computation of this 
projection, e.g., by localisation.
Define the space $\corv$ by $\corv:=(1-\cor)S^1_0(\tri_H)$, which  
implies 
\begin{align}\label{e:decompVh}
  S^1_0(\tri_h) = W_h \oplus \corv 
\end{align}
and the sum is orthogonal with respect to 
$(\nabla\bullet,\nabla\bullet)_{L^2(\Omega)}$.
The following lemma proves that the inverse 
inequality~\eqref{e:inverseineq} holds with constant $C_\mathrm{inv}(\corv)\leq C H^{-1}$ 
independent of the minimal mesh size in $\tri_h$. Moreover, a direct 
consequence of this lemma is that the approximation property~\eqref{e:approxVh} 
is preserved in the coarse space $\corv$.

\begin{lemma}\label{l:approxinverse}
There exists a constant $C_\mathrm{appr}$ such that for all 
$u\in V$ with $\Delta u\in L^2(\Omega)$ and all $u_h\in S^1_0(\tri_h)$ with 
$(\nabla u_h,\nabla \vh)_{L^2(\Omega)}=(-\Delta u,\vh)_{L^2(\Omega)}$
for all $\vh\in S^1_0(\tri_h)$, it holds
\begin{align}\label{e:approxcorv}
  \inf_{\vH\in \corv} \|u_h-\vH\|_{H^1(\Omega)}
   \leq C_\mathrm{appr} H\|\Delta u\|.
\end{align} 
Furthermore, the constant $C_\mathrm{inv}(\corv)$ from~\eqref{e:inverseineq} 
satisfies $C_\mathrm{inv}(\corv)\leq C H^{-1}$.
\end{lemma}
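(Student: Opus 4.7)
The plan is to prove the two statements separately, both resting on the $H^1$-orthogonal splitting $S^1_0(\tri_h) = W_h \oplus \corv$ together with the projectivity and the two stability properties of $I_H$.

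For the approximation statement, I would exploit the decomposition: write $u_h = w_h + v_H$ with $w_h \in W_h$ and $v_H \in \corv$. Then the best-approximation error to $\corv$ is bounded by $\|u_h - v_H\|_{H^1(\Omega)} = \|w_h\|_{H^1(\Omega)}$, so the whole task reduces to bounding $\|w_h\|_{H^1(\Omega)}$ by $C H \|\Delta u\|$. Because $w_h\in W_h$ means $I_H w_h = 0$, the first bound in \eqref{e:IHapproxstab} applied to $v = w_h$ gives $\|w_h\| \le C_{I_H}^{(1)} H \|\nabla w_h\|$, which also controls the full $H^1$-norm by $\|\nabla w_h\|$ up to a constant. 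To estimate $\|\nabla w_h\|$, I would test the defining projection-type identity (or equivalently the $H^1$-orthogonality of $v_H$ and $w_h$) to get $\|\nabla w_h\|^2 = (\nabla u_h, \nabla w_h)_{L^2(\Omega)}$, then use the hypothesis that $u_h$ is the Galerkin approximation of $u$ in $S^1_0(\tri_h)$ (noting $w_h$ is an admissible test function) to rewrite this as $(-\Delta u, w_h)_{L^2(\Omega)}$. Cauchy–Schwarz and the bound on $\|w_h\|$ just established then yield $\|\nabla w_h\| \le C H \|\Delta u\|$, finishing the approximation part.

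For the inverse inequality, the pivotal observation is that since $I_H$ is a projection onto $S^1_0(\tri_H)$, for any $v_H \in \corv \subset S^1_0(\tri_h)$ the difference $v_H - I_H v_H$ lies in $\ker I_H \cap S^1_0(\tri_h) = W_h$. By the $H^1$-orthogonality of $\corv$ and $W_h$, this gives
\begin{equation*}
 \|\nabla v_H\|^2 = (\nabla v_H, \nabla I_H v_H)_{L^2(\Omega)} \le \|\nabla v_H\| \, \|\nabla I_H v_H\|,
\end{equation*}
so $\|\nabla v_H\| \le \|\nabla I_H v_H\|$. Since $I_H v_H \in S^1_0(\tri_H)$ lives on the quasi-uniform coarse mesh, the classical inverse inequality there produces $\|\nabla I_H v_H\| \le C H^{-1} \|I_H v_H\|$, and the $L^2$-stability \eqref{e:IHL2stab} of $I_H$ concludes $\|I_H v_H\| \le C_{I_H}^{(0)} \|v_H\|$. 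Chaining these estimates gives $C_\mathrm{inv}(\corv) \le C H^{-1}$ as claimed.

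I expect the delicate point to be exactly this identification $v_H - I_H v_H \in W_h$: without the projectivity of $I_H$ and without the $L^2$-stability \eqref{e:IHL2stab}, one would either lose the orthogonality trick or be forced to pay a factor $h_{\min}^{-1}$ when transferring from $\|v_H\|$ to $\|I_H v_H\|$. Every other step is a standard combination of Cauchy–Schwarz, the coarse-mesh inverse estimate, and the two prescribed properties of $I_H$.
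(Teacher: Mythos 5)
Your proposal is correct and follows essentially the same route as the paper: your $W_h$-component $w_h$ of $u_h$ coincides (by the $H^1$-orthogonality of the splitting) with the paper's error $e_h=u_h-u_H$ of the Galerkin projection onto $\corv$, and the subsequent chain — the Poincar\'e-type bound from $I_Hw_h=0$, the identity $\|\nabla w_h\|^2=(-\Delta u,w_h)_{L^2(\Omega)}$, and for the inverse inequality the observation $v_H-I_Hv_H\in W_h$ leading to $\|\nabla v_H\|\leq\|\nabla I_Hv_H\|$ followed by the coarse inverse estimate and \eqref{e:IHL2stab} — is exactly the paper's argument.
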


\begin{proof}
Let $\uH\in \corv$ be the Galerkin projection of $u_h$ onto $\corv$, i.e., 
\begin{align*}
  (\nabla \uH,\nabla \vH)_{L^2(\Omega)} 
    = (\nabla u_h,\nabla \vH)_{L^2(\Omega)} 
    \qquad\text{for all }\vH\in \corv.
\end{align*}
Set $e_h:=u_h-\uH\in S^1_0(\tri_h)$.
The Galerkin orthogonality
\begin{align*}
 (\nabla e_h,\nabla \vH)_{L^2(\Omega)} = 0 
   \qquad\text{for all }\vH\in \corv
\end{align*}
and the orthogonality of the subspace decomposition \eqref{e:decompVh} imply that $e_h\in W_h$. 
The approximation properties~\eqref{e:IHapproxstab} of $I_H$ therefore lead to
\begin{align*}
 \|e_h\|
   = \|e_h - I_H e_h\|
   \leq C_{I_H}^{(1)} H \|\nabla e_h\|
\end{align*}
and, hence, 
\begin{align*}
 \|\nabla e_h\|^2 
   = (\nabla e_h,\nabla e_h)_{L^2(\Omega)} 
   &= (\nabla u_h,\nabla e_h)_{L^2(\Omega)} 
   = (-\Delta u, e_h)_{L^2(\Omega)}\\
  &\leq C_{I_H}^{(1)} H \|\Delta u\|\; \|\nabla e_h\|.
\end{align*}
This proves the approximation property \eqref{e:approxcorv}.

For the proof of the inverse inequality let $\vH\in\corv$.
Since $(1-\cor)$ is a projection onto $V_H$ and $(1-I_H)$ is a projection into 
$W_h$, it is easily seen that 
$$(1-\cor)I_H \vH = (1-\cor)\vH - (1-\cor)(1-I_H) \vH = \vH.$$
The orthogonality of $(1-\cor)$ with respect to 
$(\nabla\bullet,\nabla\bullet)_{L^2(\Omega)}$, hence, leads to 
\begin{align*}
  \|\nabla \vH\|
 = \|\nabla (1-\cor)I_H \vH\|
 \leq \|\nabla I_H \vH\|.
\end{align*}
The classical inverse inequality in $S^1_0(\tri_H)$ \cite{BrennerScott08} and 
the $L^2$~stability of $I_H$ from 
\eqref{e:IHL2stab} lead to 
\begin{align*}
 \|\nabla I_H \vH\|
 \leq C H^{-1} \|I_H \vH\|
 \leq C C_{I_H}^{(0)} H^{-1} \|\vH\|.
\end{align*}
The combination of the previous two inequalities concludes the proof.
\end{proof}

In the case of an adaptive refinement $\tri_h$ of $\tri_H$,
Lemma~\ref{l:approxinverse} indicates that the reduced space $V_H$ approximates 
any function $u\in V$ with $\Delta u\in L^2(\Omega)$ indeed with the same rate 
as the full space $S^1_0(\tri_h)$. 
We shall now try to relate the corresponding 
approximation errors more explicitly using techniques from a~posteriori error 
analysis. Let 
$$\osc(\tri,\Delta u):=\|h_\tri(\Delta u - \Pi^\tri_0 \Delta u)\|$$ denote the
oscillations of $\Delta u$ with respect to a triangulation $\tri$, where 
$\Pi^\tri_0$ is the $L^2$~projection  onto $\tri$-piecewise constant functions
and $h_\tri$ is the piecewise constant mesh size function.
\new{Let $\lvert T\rvert$ denote the area of a triangle for $d=2$ or the 
volume of a tetrahedron for $d=3$. 
For a function $f$ that is constant on $T\in\tri_H$, we have 
\begin{align*}
 \|H f\|_{L^2(T)} = (f\vert_T)\, H\, \lvert T\rvert^{1/2}
  \leq C(\tri_H,\tri_h)\, (f\vert_T) \sqrt{\sum_{T\supseteq K\in\tri_h} h_K^2\, \lvert K\rvert}
  = C(\tri_H,\tri_h) \,\|h_{\tri_h} f\|_{L^2(T)}
\end{align*}
with 
\begin{align*}
  C(\tri_H,\tri_h):=\max_{T\in\tri_H} 
   \left( H\lvert T\rvert^{1/2} 
    \Big/\max_{T\supseteq K \in\tri_h} (h_K\lvert K\rvert^{1/2})\right). 
\end{align*}}%
Since $\tri_h$ is a refinement of $\tri_H$, it holds that 
\new{$h_{\tri_h}\vert_K\leq H$ for all $K\in \tri_h$} and we have 
\begin{align*}
 \| H\Delta u\| 
  \leq \|H \Pi^{\tri_H}_0\Delta u\| + \osc(\tri_H,\Delta u)
   \leq C\new{(\tri_H,\tri_h)} \|h_{\tri_h} \Delta u\| 
     + (C\new{(\tri_H,\tri_h)}+1) \osc(\tri_H,\Delta u).
\end{align*}
If, e.g., only triangles at a corner singularity are refined, the constant 
$C\new{(\tri_H,\tri_h)}$ is 
uniformly bounded independent of the mesh sizes. The efficiency 
$$\|h_{\tri_h}\Delta u\|\leq C_\mathrm{eff} \|\nabla(u-u_h)\| 
+ \osc(\tri_h,\Delta u)$$ from a~posteriori error analysis \cite{Verfuerth96}
then proves together with a triangle inequality, 
Lemma~\ref{l:approxinverse} and C\'ea's lemma
\begin{align*}
 \|\nabla (u-\uH)\|
   \leq C \big( \|\nabla(u-u_h)\| + \osc(\tri_H,\Delta u)\big)
\end{align*}
for the Galerkin projection $\uH$ of $u$ in $\corv$\new{, where $C$ depends 
on $C(\tri_H,\tri_h)$}.
This means that the Galerkin approximation of $u$ in $\corv$ is comparable 
with that in $S^1_0(\tri_h)$ up to oscillations.

\subsection{Illustration by finite element eigenvalues}
\label{ss:eigenvalues}

\begin{figure}
 \begin{center}
   \includegraphics[width=0.49\textwidth]{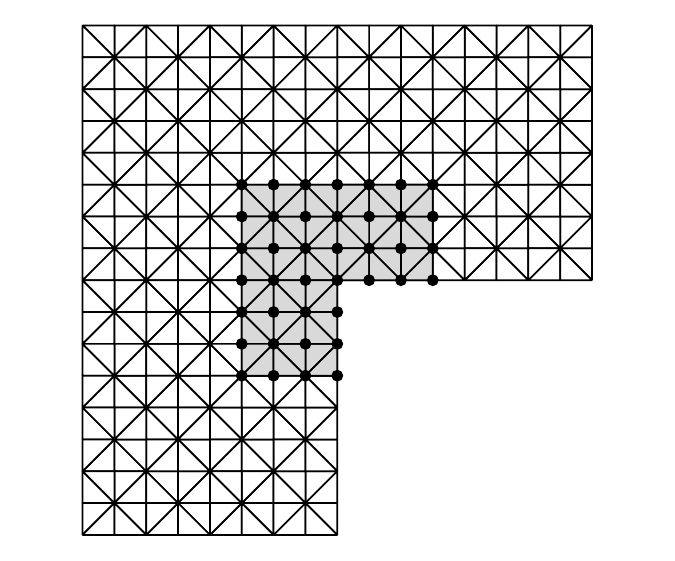}
 \hfill
   \includegraphics[width=0.49\textwidth]{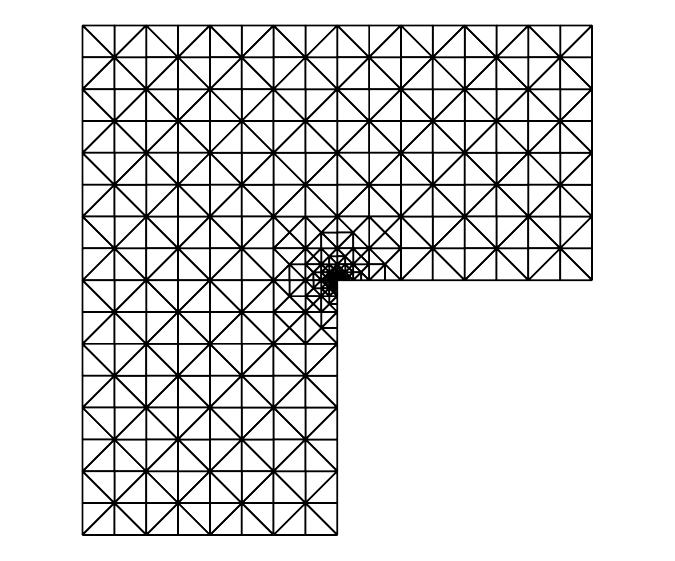}
 \end{center}
 \caption{\label{f:triangulations1}Sample triangulations $\tri_{H}$ (left) and 
 $\tri_{h}$ (right). The shaded area in the left triangulation marks the 
 support of functions in the kernel space $W_h$ (see Subsection~\ref{sss:locality}).}
\end{figure}

\begin{figure}
 \begin{center}
   \includegraphics[height=0.41\textwidth]{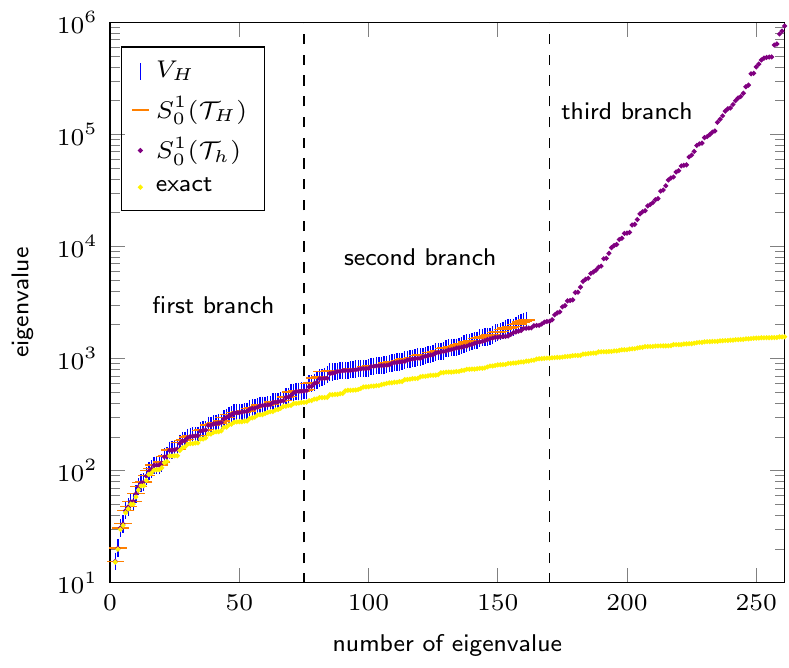}
 \hfill 
   \includegraphics[height=0.41\textwidth]{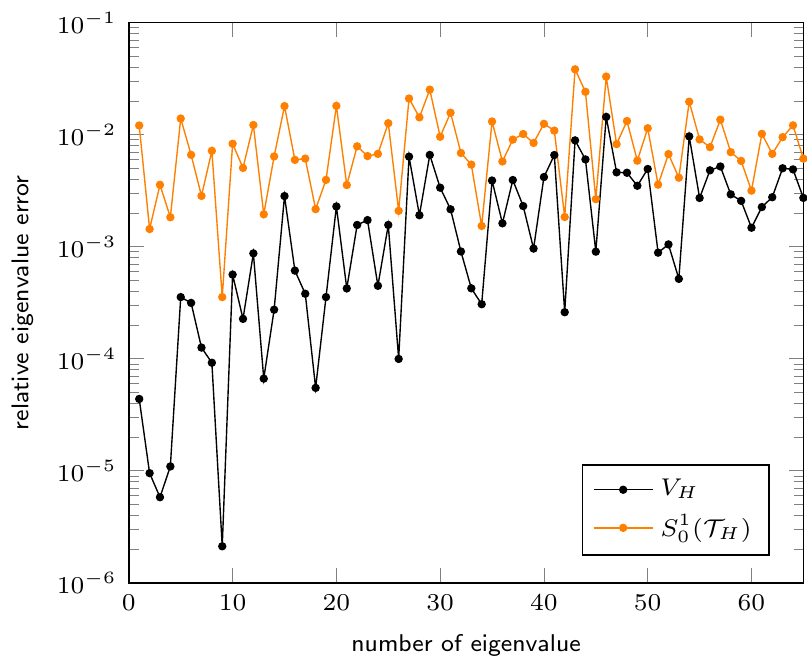}
 \end{center}
 \caption{\label{f:spectrum}
 Spectra of  finite element discretisations  of the 
 Laplacian based on the spaces $S^1_0(\tri_H)$, $S^1_0(\tri_h)$ and the 
 generalised finite element space $V_H$ (left) for the triangulations from 
 Figure~\ref{f:triangulations1}. The exact eigenvalues 
 of the Laplacian with homogeneous Dirichlet boundary condition are depicted 
 for reference. On the right, the relative eigenvalue errors for the 
 approximations in $V_H$ and $S^1_0(\tri_H)$ of the eigenvalues in 
 $S^1_0(\tri_h)$ are plotted for the first branch.}
\end{figure}

We shall finally have a look at the advantageous properties of the reduced 
ansatz space $\corv$ from a different angle. 
Figure~\ref{f:spectrum} shows the eigenvalues related to the $P_1$ finite 
element discretisation of the Laplacian with homogeneous Dirichlet boundary 
condition on the refined triangulation $\tri_h$ depicted in 
Figure~\ref{f:triangulations1}. Essentially, the spectrum consists of three branches indicated by the 
dotted lines. The eigenvalues in the first branch are meaningful 
approximations of the corresponding exact 
eigenvalues of the Laplacian and the corresponding eigenfunctions reflect 
true modes of the operator. The eigenvalues of the second and third branch are  
spurious in the sense that they do not 
approximate Laplacian eigenvalues. The artificial modes of 
the second branch are related to the fact that the finite element space does 
not satisfy $\Delta S^1_0(\tri_h)\subseteq L^2(\Omega)$ 
\cite{Gallistl.Huber.Peterseim:2015}. The artificial eigenvalues of the third 
branch are related to the degrees of freedom introduced by the local mesh 
refinement. The largest of them scales like the reciprocal squared minimal
mesh size which leads to the restrictive CFL condition.  

In a way, this restriction is the 
result of additional flexibility of the 
finite element space introduced through local mesh refinement. The role of 
the reduction process is to eliminate those artificial modes of the third 
branch while preserving the first branch extremely accurately, in particular, 
much more accurately than standard finite elements on the coarse mesh. That this 
is indeed the case is also illustrated in Figure~\ref{f:spectrum} where the 
right plot shows 
that the novel subspace reduction improves the approximation of the first branch by orders of 
magnitude when compared with the standard finite elements $S^1_0(\tri_H)$ of 
the same dimension. 
The largest eigenvalues in $V_H$ and $S^1_0(\tri_H)$ are very close to each 
other, and therefore the CFL condition leads to almost the same restriction 
of the time step size. 
For a rigorous analysis of eigenvalue errors that justifies these observations
we refer to previous works on two-level methods for linear and non-linear 
eigenvalue problems \cite{Henning.Mlqvist.Peterseim:2013,Mlqvist.Peterseim:2015,MalqvistPeterseim2014}.

\subsection{Example for $L^2$ and $H^1$ stable quasi interpolation}
\label{ss:exampleIH}

This subsection gives an example for a projective quasi interpolation operator  
$I_H$ that satisfies~\eqref{e:IHapproxstab} and \eqref{e:IHL2stab}.

Define the space of (possibly discontinuous) piecewise affine functions 
over $\tri_H$ as 
\begin{align*}
 P_1(\tri_H):=\{w\in L^2(\Omega)\mid \forall T\in\tri_H{:}\; w\vert_T\text{ is affine}\}.
\end{align*}
Given $w_H\in P_1(\tri_H)$, define the nodal averaging operator 
$J_1:P_1(\tri_H)\to S^1_0(\tri_H)$ by the averaging over the values 
of adjacent simplices, i.e., 
\begin{align*}
 J_1 w_H (z):= (\mathrm{card}(\{K\in \tri_H\mid z\in K\}))^{-1} 
    \sum_{\substack{K\in\tri_H\\ z\in K}} w_H\vert_K (z)
\end{align*}
for all interior nodes $z$ of $\tri_H$.
This kind of operator is well known in the context of fast 
solvers~\cite{Brenner1996,Oswald93} and in the a~posteriori analysis of 
discontinuous Galerkin methods~\cite{KarakashianPascal2003}.
Let $\Pi_1^{\tri_H}:V\to P_1(\tri_H)$ denote the $L^2$~projection onto 
$P_1(\tri_H)$ and define $I_H:V\to S^1_0(\tri_H)$ by 
\begin{align*}
 I_H v := J_1(\Pi_1^{\tri_H} v)
 \qquad\text{for all }v\in V.
\end{align*}
Then $I_H$ is a projection. The following lemma proves that it  
satisfies the $L^2$~stability~\eqref{e:IHL2stab} and the approximation and 
stability properties~\eqref{e:IHapproxstab}.

\begin{lemma}[stability of $I_H$]\label{l:L2stabIH}
The operator $I_H$ is $H^1$ and $L^2$~stable in the sense that it 
satisfies~\eqref{e:IHapproxstab} and~\eqref{e:IHL2stab}.
\end{lemma}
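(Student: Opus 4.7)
My plan is to split the argument into two parts: first I would establish the $L^2$ stability~\eqref{e:IHL2stab} by exploiting the factorisation $I_H = J_1 \circ \Pi_1^{\tri_H}$; then I would extract the approximation and $H^1$ stability~\eqref{e:IHapproxstab} from a Scott--Zhang comparison together with the projection property $I_H \circ I_H = I_H$ and the local $L^2$ bound obtained in the first step. The $L^2$ stability is really the load-bearing statement; once it is in hand, the remaining bounds are almost automatic.

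For the $L^2$ stability, the $L^2$ orthogonality of $\Pi_1^{\tri_H}$ already gives $\|\Pi_1^{\tri_H} v\| \leq \|v\|$, so it remains to show $\|J_1 w\| \leq C \|w\|$ for all $w \in P_1(\tri_H)$. On a fixed $T \in \tri_H$ with vertex set $\mathcal{N}(T)$, the function $J_1 w|_T$ is affine and therefore determined by its nodal values, each of which is either zero (at a boundary vertex) or a convex average of the element-wise values $w|_K(z)$ for $K \ni z$. The standard norm equivalence $\|p\|_{L^2(T)}^2 \leq C |T| \sum_{z \in \mathcal{N}(T)} |p(z)|^2$ on affine functions on a shape-regular simplex, combined with the inverse estimate $|w|_K(z)|^2 \leq C |K|^{-1} \|w\|_{L^2(K)}^2$, yields the local bound $\|J_1 w\|_{L^2(T)}^2 \leq C \|w\|_{L^2(\omega_T)}^2$ on the vertex patch $\omega_T := \bigcup \{K \in \tri_H : \mathcal{N}(K) \cap \mathcal{N}(T) \neq \emptyset\}$. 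Summing over $T \in \tri_H$ and exploiting the uniformly bounded overlap of patches completes the bound.

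For~\eqref{e:IHapproxstab} I would fix $v \in V$ and let $w_H \in S^1_0(\tri_H)$ denote the Scott--Zhang interpolant of $v$, which respects the homogeneous boundary condition and enjoys the classical local estimates $\|v - w_H\|_{L^2(T)} + H_T \|\nabla w_H\|_{L^2(T)} \leq C H_T \|\nabla v\|_{L^2(\widetilde\omega_T)}$ on a shape-regular enlargement $\widetilde\omega_T$ of $T$. Since $I_H w_H = w_H$, one has $v - I_H v = (v - w_H) - I_H(v - w_H)$ and $I_H v = w_H + I_H(v - w_H)$. The local $L^2$ stability proved above applied to $v - w_H$ gives $\|I_H(v - w_H)\|_{L^2(T)} \leq C \|v - w_H\|_{L^2(\omega_T)}$, while the classical inverse inequality on the piecewise affine function $I_H(v - w_H) \in S^1_0(\tri_H)$ gives $\|\nabla I_H(v - w_H)\|_{L^2(T)} \leq C H_T^{-1} \|I_H(v - w_H)\|_{L^2(T)}$. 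Combining these with the Scott--Zhang estimates and summing over $T$ yields simultaneously $\|H^{-1}(v - I_H v)\| \leq C \|\nabla v\|$ and $\|\nabla I_H v\| \leq C \|\nabla v\|$.

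The main obstacle is the $L^2$ stability of $J_1$. Unlike the $H^1$ bound or the approximation estimate, which follow from abstract projection arguments, this stability is not built into the construction and has to be extracted from the patch-wise analysis sketched above; it genuinely relies on shape regularity through the norm equivalence on affine functions and the local inverse estimate. Boundary vertices are handled transparently, since enforcing $J_1 w(z) = 0$ there merely drops nonnegative terms from the nodal sums.
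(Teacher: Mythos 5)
Your proposal is correct, and it splits naturally into two halves of different character. The $L^2$-stability part coincides in substance with the paper's argument: the paper bounds $\|J_1 w_H\|_{L^2(T)}^2$ via the extremal eigenvalues of the elementwise mass matrices, which is exactly the norm equivalence $\|p\|_{L^2(T)}^2\approx |T|\sum_{z}|p(z)|^2$ for affine $p$ that you invoke, and both arguments then use shape regularity to compare $|T|$ with $|K|$ on neighbouring simplices and finite patch overlap to globalise; combined with the elementwise $L^2$-boundedness of $\Pi_1^{\tri_H}$ this gives~\eqref{e:IHL2stab} either way. For~\eqref{e:IHapproxstab}, however, you take a genuinely different route. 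The paper estimates $\|\Pi_1^{\tri_H}v - J_1\Pi_1^{\tri_H}v\|$ directly through a stability result for the averaging operator that controls it by $\|\nabla\Pi_1^{\tri_H}v\|$ plus interelement jump terms $\sum_F H^{-1}\|[\Pi_1^{\tri_H}v]_F\|_{L^2(F)}^2$, which are then absorbed via a trace inequality and a piecewise Poincar\'e inequality; this requires the external jump estimate and a separate verification that $\|\nabla\Pi_1^{\tri_H}v\|\leq C\|\nabla v\|$. You instead interpose a Scott--Zhang interpolant $w_H\in S^1_0(\tri_H)$, use the projection property $I_H w_H=w_H$ to write $v-I_Hv=(v-w_H)-I_H(v-w_H)$ and $I_Hv=w_H+I_H(v-w_H)$, and then only need the local $L^2$ bound from the first step (note that $\Pi_1^{\tri_H}$ is elementwise, so $I_H$ is indeed locally $L^2$-stable on vertex patches) together with the elementwise inverse inequality. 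This is the standard quasi-interpolation comparison trick; it is more self-contained, avoids the trace and jump machinery entirely, and delivers the approximation bound and the $H^1$ stability in one stroke. What the paper's route buys in exchange is that the identical jump-based argument transfers verbatim to the discontinuous Galerkin setting of Subsection~\ref{ss:dG}, where there is no conforming comparison interpolant to lean on. Your handling of boundary vertices and of the requirement that $w_H$ respect the homogeneous Dirichlet condition (so that $w_H\in S^1_0(\tri_H)$ and $I_Hw_H=w_H$) is sound.
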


\begin{proof}
The proof follows from the more general situation in~\cite{ErnGuermond2015},
but is given here for the sake of completeness and self-contained reading.

We first prove the $L^2$~stability of $J_1:P_1(\tri_H)\to S^1_0(\tri_H)$ and 
then conclude the approximation properties and the $H^1$ and $L^2$~stability 
of $I_H$.

Let $w_H\in P_1(\tri_H)$ and $T=\mathrm{conv}\{z_0,\dots,z_d\}\in \tri_H$ 
and let $\lvert T\rvert$ denote the ($d$-dimensional) volume of $T$. 
\new{The definition of $J_1$ involves adjacent simplices of $T$. For such an 
adjacent simplex $K$, let}
$\lambda_\mathrm{max}(T)$ and $\lambda_\mathrm{min}(K)$ denote the maximal 
and the minimal eigenvalue of the mass matrix on the simplex $T$ and $K$.
The definition of $J_1$ then implies 
\begin{align*}
  \|J_1 w_H\|_{L^2(T)}^2
    \leq \lambda_\mathrm{max}(T) \sum_{j=0}^d \lvert(J_1 w_H)(z_j)\rvert^2
    &\leq C \lambda_\mathrm{max}(T) \sum_{j=0}^d
           \sum_{\substack{K\in \tri_H\\z_j\in K}} 
                       \big\lvert w_H\vert_K(z_j)\big\rvert^2\\
    &\leq C \sum_{j=0}^d
           \sum_{\substack{K\in \tri_H\\z_j\in K}} 
             \frac{\lambda_\mathrm{max}(T)}{\lambda_\mathrm{min}(K)}
             \,\|w_H\|_{L^2(K)}^2.
\end{align*}
The shape regularity of $\tri_H$ implies that there exists a generic constant 
$c>0$ with 
$\lvert T\rvert\leq c \lvert K\rvert$ for any $K\in\tri_H$ with 
$K\cap T\neq\emptyset$.
This implies 
\begin{align*}
  \frac{\lambda_\mathrm{max}(T)}{\lambda_\mathrm{min}(K)}
  \leq C.
\end{align*}
Therefore, $J_1$ satisfies a (local) $L^2$~stability, which together with the
fact that the number of overlapping simplices is bounded leads to the 
$L^2$~stability
\begin{align*}
  \|J_1 w_H\|
   \leq C \|w_H\|.
\end{align*}
Since $\Pi_1^{\tri_H}:V\to P_1(\tri_H)$ is the 
$L^2$~projection, this operator is $L^2$~stable. The $L^2$~stability of $J_1$
therefore implies the $L^2$~stability~\eqref{e:IHL2stab} of 
$I_H=J_1\circ \Pi_1^{\tri_H}$.

Let now $v\in V$. A triangle inequality, the fact that 
$\|v-\Pi_1^{\tri_H}v\|\leq \|v-\Pi_0^{\tri_H}v\|$ for the $L^2$~projection 
$\Pi_0^{\tri_H}$ onto piecewise constants and a piecewise Poincar\'e inequality
lead to
\begin{align*}
  H^{-1}\|v-I_H v\|
   &\leq H^{-1}\|v-\Pi_1^{\tri_H} v\| + H^{-1}\|\Pi_1^{\tri_H} v - J_1 \Pi_1^{\tri_H} v\|\\
  & \leq C \|\nabla v\| + H^{-1}\|\Pi_1^{\tri_H} v - J_1 \Pi_1^{\tri_H} v\|.
\end{align*}
Define the set of hyper-surfaces $\mathcal{F}$ and let $[\bullet]_F$ denote 
the jump across a hyper-surface $F\in\mathcal{F}$.
The stability of $J_1$ 
from~\cite[Lemma~4.8]{CarstensenGallistlSchedensack2016} proves
\begin{align*}
  H^{-2}\|\Pi_1^{\tri_H} v - J_1 \Pi_1^{\tri_H} v\|^2
    \leq C\left( \|\nabla \Pi_1^{\tri_H} v\|^2 
       + \sum_{F\in\mathcal{F}} H^{-1} \|[\Pi_1^{\tri_H} v]_F\|_{L^2(F)}^2\right).
\end{align*}
Since $v$ is continuous in the sense of traces, the trace inequality 
from~\cite[Lemma~1.49]{DiPietroErn2012} and the finite overlap 
of patches imply 
\begin{align*}
 \sum_{F\in\mathcal{F}} H^{-1} \|[\Pi_1^{\tri_H} v]_F\|_{L^2(F)}^2
   &= \sum_{F\in\mathcal{F}} H^{-1} \|[v-\Pi_1^{\tri_H} v]_F\|_{L^2(F)}^2\\
   &\leq C\left( H^{-2} \|v-\Pi_1^{\tri_H}v\|^2 + \|\nabla(v-\Pi_1^{\tri_H}v)\|^2\right).
\end{align*}
Again, a piecewise Poincar\'e inequality bounds the first term on the right-hand side 
by $\|\nabla v\|$. An inverse inequality, the $L^2$~stability 
of $\Pi_1^{\tri_H}$ and 
$\Pi_1^{\tri_H} \Pi_0^{\tri_H} v= \Pi_0^{\tri_H} v$ prove for all $T\in\tri_H$ 
that
\begin{align*}
 \|\nabla \Pi_1^{\tri_H} v\|_{L^2(T)} 
   &= \|\nabla (\Pi_1^{\tri_H} v-\Pi_0^{\tri_H} v)\|_{L^2(T)}
   \leq C H^{-1} \|\Pi_1^{\tri_H} (v-\Pi_0^{\tri_H} v)\|_{L^2(T)}\\
   &\leq C H^{-1} \|v-\Pi_0^{\tri_H} v\|_{L^2(T)}
   \leq C \|\nabla v\|_{L^2(T)}.
\end{align*}
The combination of the previous inequalities yield the approximation 
property 
\begin{align}\label{e:approxIHexample}
  H^{-1}\|v-I_H v\|\leq C \|\nabla v\|
\end{align}
of $I_H$.

For the proof of the $H^1$ stability of $I_H$ let $v\in V$. An inverse,  
a triangle and a piecewise Poincar\'e inequality and the approximation 
property~\eqref{e:approxIHexample} lead to 
\begin{align*}
  \|\nabla I_H v\|
    = \|\nabla (I_H v-\Pi_0^{\tri_H} v)\|
    &\leq C H^{-1} \|I_H v-\Pi_0^{\tri_H} v\|\\
    &\leq C H^{-1}\|v - I_H v\| + C H^{-1} \|v-\Pi_0^{\tri_H} v\|
    \leq C \|\nabla v\|.
\end{align*}
This proves~\eqref{e:IHapproxstab} and concludes the proof.
\end{proof}

\section{Application to the wave equation}\label{s:discretisation}

This section defines the leapfrog discretisation of the wave equation based on 
the spatial Galerkin approximation in the reduced space $\corv$ 
in Subsection~\ref{ss:defstabFEM2} and states an error estimate and stability
in Subsection~\ref{ss:errorestimate}. 

Given $f\in L^2(0,T;L^2(\Omega))$,
the wave equation~\eqref{e:strongform} in its weak form seeks 
$u\in L^2(0,T;V)$ with $\dot{u}\in L^2(0,T;L^2(\Omega))$
and $\ddot{u}\in L^2(0,T;H^{-1}(\Omega))$ such that 
for almost all $t\in [0,T]$ and all $v\in V$
\begin{align}\label{e:weakform}
 \left\langle\ddot{u}(t) , v\right\rangle_{H^{-1}(\Omega)\times H^1_0(\Omega)}
   + (\nabla u(t),\nabla v)_{L^2(\Omega)} = (f(t),v)_{L^2(\Omega)}.
\end{align}

\subsection{The leapfrog in the reduced space}\label{ss:defstabFEM2}

Let the time step size $\Delta t$ satisfy the relaxed CFL-condition
\begin{align}\label{e:CFLstab}
 \Delta t < \sqrt{2}/C_\mathrm{inv}(\corv)
\end{align}
and let $N=\lceil T/\Delta t \rceil$ be the number of time steps.
Recall the definition of the space $\corv$ from 
Subsection~\ref{ss:constructioncorv}.
Given approximations $\uH^0\in \corv$ to $u(0)$ and $\uH^1\in\corv$ to 
$u(\Delta t)$,
this method seeks $(\uH^n)_{n=0,\dots,N}$ with 
$\uH^n\in \corv$ such that for all $n=2,\dots,N$ and all
$\vH\in \corv$
\begin{equation}\label{e:defstabFEM2}
\begin{aligned}
 &\left( \frac{\uH^{n+1} - 2 \uH^n + \uH^{n-1}}{(\Delta t)^2},
         \vH\right)_{\hspace{-1ex}L^2(\Omega)} 
   + (\nabla \uH^n, \nabla \vH)_{L^2(\Omega)}
  = \left(f(n \Delta t), \vH\right)_{L^2(\Omega)}.
\end{aligned}
\end{equation}
This is the standard leapfrog time discretisation.
We shall emphasise that the pre-computation of the space $\corv$ needs to 
be done only once. 

Lemma~\ref{l:approxinverse} proves that $C_\mathrm{inv}(V_H)\leq C H^{-1}$
and, hence, 
the CFL condition~\eqref{e:CFLstab} states that the time step size is 
in the range of the mesh size of the (coarse) quasi-uniform triangulation $\tri_H$.
In the presence of singularities, this is a much weaker condition 
compared with the CFL condition~\eqref{e:CFLintro} for the space 
$S^1_0(\tri_h)$.

One of the fundamental properties of the leapfrog scheme is the conservation 
of energy in the following sense.
Given $(\vH^n)_{n=0,\dots,N}$ with $\vH^n\in\corv$ for $n=0,\dots,N$, define the discrete 
time derivative by
\begin{align*}
 \dot{v}_H^{n+1/2}=\frac{\vH^{n+1}-\vH^n}{\Delta t}
\end{align*}
for $n=1,\dots,N-1$ and
define the discrete energy as 
\begin{align*}
 \energy^{n+1/2}((\vH^k)_{k=0,\dots,N})
    := \tfrac{1}{2}\big(\|\dot{v}^{n+1/2}_H\|^2 + (\nabla \vH^n,\nabla\vH^{n+1})\big).
\end{align*}
Then the discrete energy of the solution $(\uH^k)_{k=0,\dots,N}$ 
of~\eqref{e:defstabFEM2} is conserved in the sense that 
\begin{align*}
 \energy^{n+1/2}((\uH^k)_{k=0,\dots,N})
   = \energy^{n-1/2}((\uH^k)_{k=0,\dots,N})
      + \tfrac{1}{2}\Delta t \big(f(t_n),\dot{u}^{n+1/2}_H+ \dot{u}^{n-1/2}_H\big)_{L^2(\Omega)}.
\end{align*}

\subsection{Stability and error estimates}\label{ss:errorestimate}

The following theorem estimates the difference between the discrete solution 
of~\eqref{e:defstabFEM2} and the exact solution $u$ of~\eqref{e:weakform}.
Let $z_H\in L^2(0,T;\corv)$ denote the auxiliary semi-discrete solution, i.e.,  
$\dot{z}_H\in L^2(0,T;\corv)$,  $\ddot{z}_H\in L^2(0,T;\corv)$ and $z_H$ solves
\begin{align*}
 \left\langle\ddot{z}_H(t) , \vH\right\rangle_{H^{-1}(\Omega)\times H^1_0(\Omega)}
   + (\nabla z_H(t),\nabla \vH)_{L^2(\Omega)} = (f(t),\vH)_{L^2(\Omega)}
   \quad\text{for all }\vH\in \corv
\end{align*}
for almost all $t\in[0,T]$
with initial conditions $z_H(0)=\uH^0$ and $\dot{z}_H(0)=z_H^0$ for some 
$z_H^0$.
As usual, the error is split in the time discretisation error $(E^n)_{n=0,\dots,N}$ 
defined by $E^n:=\uH^n - z_H(n\Delta t)$ and the space 
discretisation error $z_H(n\Delta t) - u(n\Delta t)=
z_H(n\Delta t)-\Pi_{\corv} u(n\Delta t)-\varepsilon(n\Delta t)$ with 
the best-approximation error
$\varepsilon(t):=u(t)-\Pi_{\corv} u(t)$, where 
$\Pi_{\corv} u(t)$ denotes the orthogonal projection of $u(t)$
onto $\corv$ with respect to the bilinear form 
$(\nabla\bullet,\nabla\bullet)_{L^2(\Omega)}$.

The proof of the following theorem is based on the conservation of the 
discrete energy from Subsection~\ref{ss:defstabFEM2} and follows as for the 
standard leapfrog scheme (see~\cite{Christiansen2009,Joly2003}) and is 
therefore dropped. 

\begin{theorem}[error estimate for reduced FEM]\label{t:totalerrorest}
If the relaxed CFL condition~\eqref{e:CFLstab} 
is satisfied, then~\eqref{e:defstabFEM2} is stable in the sense that 
\begin{align*}
  \|\dot{u}_H^{n+1/2}\| + \|\nabla u_H^{n+1}\|
  \leq C \left(\|\dot{u}_H^{1/2}\| + \|\nabla u_H^0\|
     + \|\nabla u_H^1\| + \sum_{k=2}^n \Delta t\|f(k\Delta t)\|\right).
\end{align*}
Furthermore, 
if $\ddot{u}\in L^1(0,T;L^2(\Omega))$ and $\ddot{z}_H\in C(0,T;V_H)$,
then it holds with $t_n=n\Delta t$ that
\begin{equation}\label{e:totalerrorest}
\begin{aligned}
 &\left\|\frac{(\uH^n-u(t_n))
          -(\uH^{n-1}+u(t_{n-1}))}{\Delta t}\right\|
   + \|\nabla (\uH^n-u(t_n))\|\\
 &\;\leq C\left(\|\dot{E}^{1/2}\| + \|\nabla E^1\|
    + \left\|\dot{z}_H(0)-\Pi_{\corv} \dot{u}(0)\right\|
      + \|\nabla (z_H(0)-\Pi_{\corv} u(0)\| \right.\\
  &\qquad \qquad  \;\left.
      + \left\|\frac{\varepsilon(t_n)-\varepsilon(t_{n-1})}{\Delta t}\right\|
      + \|\nabla \varepsilon(t_n)\|
      + \int_0^{t_n}
         \left\|\ddot{\varepsilon}(s)\right\| \,ds
      \right.\\
   &\qquad\qquad  \;    \left.   
    + \sum_{j=1}^n \Delta t
       \left\|\frac{z_H(t_{j+1}) - 2 z_H(t_j)
                              + z_H(t_{j-1})}{(\Delta t)^{2}}
           - \ddot{z}_H(t_j)\right\|\right).\\
\end{aligned}
\end{equation}
\end{theorem}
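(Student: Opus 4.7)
The plan is to mirror the classical stability/consistency argument for the leapfrog scheme, noting that \eqref{e:defstabFEM2} is structurally the standard leapfrog with trial/test space $\corv$ in place of $S^1_0(\tri_h)$. I would establish the stability bound first, then use it to control the time-discretisation error, and handle the semi-discrete error separately by continuous energy methods.

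For the stability estimate I would test \eqref{e:defstabFEM2} with the centered difference $(\uH^{n+1}-\uH^{n-1})/(2\Delta t)$ to recover the energy identity stated just after \eqref{e:defstabFEM2}. Polarisation gives $(\nabla\uH^n,\nabla\uH^{n+1}) = \tfrac{1}{2}(\|\nabla\uH^n\|^2+\|\nabla\uH^{n+1}\|^2) - \tfrac{(\Delta t)^2}{2}\|\nabla\dot u_H^{n+1/2}\|^2$, and combining this with the inverse inequality \eqref{e:inverseineq} on $\corv$ yields
\[
\energy^{n+1/2} \geq \tfrac{1}{2}\bigl(1 - (\Delta t\,C_\mathrm{inv}(\corv))^2/2\bigr)\|\dot u_H^{n+1/2}\|^2 + \tfrac{1}{4}\bigl(\|\nabla\uH^n\|^2+\|\nabla\uH^{n+1}\|^2\bigr),
\]
which under the relaxed CFL condition \eqref{e:CFLstab} is strictly coercive in the quantities appearing on the left-hand side of the stability bound. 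Telescoping the energy identity, applying Cauchy--Schwarz to the forcing term and a discrete Gronwall argument then delivers the stated stability estimate.

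For the error bound I would decompose
\[
\uH^n - u(t_n) \;=\; E^n + \bigl(z_H(t_n)-\Pi_{\corv} u(t_n)\bigr) - \varepsilon(t_n)
\]
and treat each summand separately. The sequence $(E^n)\subset\corv$ satisfies \eqref{e:defstabFEM2} with right-hand side replaced by the consistency defect $\tau^n := \ddot z_H(t_n) - (z_H(t_{n+1})-2z_H(t_n)+z_H(t_{n-1}))/(\Delta t)^2$ (the assumption $\ddot z_H\in C(0,T;V_H)$ making this meaningful pointwise), and applying the stability estimate to $(E^n)$ with forcing $\tau^n$ produces exactly the $\|\dot E^{1/2}\|+\|\nabla E^1\|+\sum_j\Delta t\,\|\tau^j\|$ contributions in \eqref{e:totalerrorest}. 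The semi-discrete error $\eta(t):=z_H(t)-\Pi_{\corv} u(t)$ lies in $\corv$ and, subtracting the semi-discrete equation for $z_H$ from \eqref{e:weakform} tested against $\vH\in\corv$ and using the Galerkin orthogonality $(\nabla\varepsilon(t),\nabla\vH)_{L^2(\Omega)}=0$ of $\Pi_{\corv}$, satisfies a wave equation of the same type in $\corv$ with right-hand side $\ddot\varepsilon(t)$ and initial data $z_H(0)-\Pi_{\corv} u(0)$, $\dot z_H(0)-\Pi_{\corv}\dot u(0)$. A standard continuous-in-time energy identity and Duhamel's principle yield
\[
\|\dot\eta(t_n)\|+\|\nabla\eta(t_n)\| \leq C\Bigl(\|\dot\eta(0)\|+\|\nabla\eta(0)\|+\int_0^{t_n}\|\ddot\varepsilon(s)\|\,ds\Bigr),
\]
which accounts for the remaining initial-data and oscillation terms. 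A triangle inequality, together with the direct contributions of $\varepsilon(t_n)$ and its discrete first difference, concludes \eqref{e:totalerrorest}.

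The only genuinely delicate step is the coercivity estimate for $\energy^{n+1/2}$, for which the inverse inequality \eqref{e:inverseineq} on $\corv$ (rather than on $S^1_0(\tri_h)$) is crucial and is precisely what allows the relaxed CFL condition \eqref{e:CFLstab} to suffice for stability; this is where the whole construction of Section~\ref{s:idearelax} pays off. All remaining arguments are routine bookkeeping for the leapfrog scheme, which is why the authors defer to standard references.
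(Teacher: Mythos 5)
Your proposal is correct and follows exactly the route the paper intends: the paper omits the proof, pointing to the discrete energy conservation of Subsection~3.1 and the standard leapfrog analysis of Christiansen and Joly, and your argument --- coercivity of $\energy^{n+1/2}$ via the inverse inequality \eqref{e:inverseineq} under \eqref{e:CFLstab}, telescoping for stability, then the splitting into $E^n$, the projected semi-discrete error and $\varepsilon$ --- is precisely that standard argument adapted to $\corv$. No discrepancies with the paper's stated error decomposition or with the terms appearing in \eqref{e:totalerrorest}.
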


Note that the fifth to seventh term on the right-hand side 
of~\eqref{e:totalerrorest} only contain the best-approximation error 
$\varepsilon$ of $u$ in $\corv$. Therefore, Lemma~\ref{l:approxinverse}
can be applied. 
If $u\in C^2(0,T;L^2(\Omega))$ and $f\in C(0,T;L^2(\Omega))$, then 
$\Delta u=\ddot{u}-f\in C^0(0,T;L^2(\Omega))$, and the term 
$\|\Delta u\|_{L^2(\Omega)}$ in the right-hand side of~\eqref{e:approxcorv}
is bounded.
Therefore, under the additional (standard) regularity assumptions
$u\in C^4(0,T;L^2(\Omega))$ and $f\in C^2(0,T;L^2(\Omega))$, the fifth 
to seventh term 
can be bounded by $H$. 
With the regularity assumption $z_H\in C^4(0,T;L^2(\Omega))$, the last 
term on the right-hand side of~\eqref{e:totalerrorest} converges as $(\Delta t)^2$.
For suitable initial conditions, this leads to a convergence rate of 
the approximation~\eqref{e:defstabFEM2} of $H+(\Delta t)^2$.

\section{Practical aspects and possible generalisations}\label{s:praxis}

This section is concerned with practical aspects of the computation of 
$(\uH^n)_{n=0,\dots,N}$ from~\eqref{e:defstabFEM2}. Subsection~\ref{ss:sparsity} discusses the 
sparsity properties of the stiffness and mass matrix associated with the 
reduced space $\corv$. Since the computation of $\corv$ has to be done only 
once, the sparsity properties serve as measure for the overall complexity.
Subsection~\ref{ss:preconditioner}
shows that the inverse diagonal is an optimal preconditioner for the mass 
matrix of the reduced space.
Subsection~\ref{ss:dG} concludes this section 
with a generalisation to discontinuous Galerkin FEMs.

\subsection{Sparsity of the reduced space}\label{ss:sparsity}

In contrast to standard finite element spaces, basis functions of $\corv$ 
are not a~priori known, but can be defined by the canonical choice 
$\lambda_z-\cor\lambda_z$ for the standard nodal basis functions $\lambda_z$ 
of $S^1_0(\tri_H)$. However, problem~\eqref{e:corprob} for the computation of 
$\cor\lambda_z$ is formulated on the whole domain $\Omega$, and might lead 
to global basis functions and dense stiffness and mass matrices.
This subsection identifies cases where sparsity is automatically preserved
(Subsection~\ref{sss:locality}) and shows how to achieve sparse approximations
in the general case (Subsection~\ref{sss:localisation}).

\subsubsection{Sparsity on locally adaptive meshes}\label{sss:locality}

We consider the case that the triangulation $\tri_h$ refines $\tri_H$ 
only locally in the sense that only a small number of coarse elements is 
actually refined, for an example see Figure~\ref{f:triangulations1}.
In this case the corrector problem \eqref{e:corprob} reduces 
to a local one because the kernel space $W_h$ vanishes outside 
of (one layer around) the refined region (see the following proposition). 
Therefore, $\cor \lambda_z = 0$ for all basis functions $\lambda_z$ for 
nodes $z$ of $\tri_H$ with the property that the two-layer patch 
$$\Omega_z:=\{x\in\Omega\mid \exists T,T'\in\tri_H\text{ such that }
z\in T, x\in T'\text{ and }T\cap T'\neq\emptyset\}$$ lies in the non-refined 
region, $\Omega_z\subseteq \bigcup(\tri_H\cap\tri_h)$. 
In the example of Figure~\ref{f:triangulations1}, all nodes $z$
for which $\cor \lambda_z\neq 0$ are highlighted as well as 
the union of the supports 
of functions in $W_h$.

\begin{proposition}[locality of problems~\eqref{e:corprob}]\label{p:localcorrectors}
Assume that $I_H$ satisfies the local $L^2$~stability
\begin{align*}
 \|I_H v\|_{L^2(T)}
   \leq C \|v\|_{L^2(\Omega_T)}
   \qquad\text{for all }v\in V
\end{align*}
for all $T\in\tri_H$ and $\Omega_T=\bigcup \{K\in\tri_H\mid K\cap T\neq\emptyset\}$. 
Let $w_h\in W_h$. Then 
$w_h\vert_{\widetilde{\Omega}}=0$ for 
$\widetilde{\Omega}=\bigcup \{T\in\tri_H\mid \forall K\in \tri_H\text{ with }
K\cap T\neq\emptyset\text{ it holds }K\in\tri_H\cap\tri_h\}$.
\end{proposition}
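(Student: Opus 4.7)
The plan is to combine two properties of $I_H$: the hypothesised local $L^2$~stability, which yields a locality property of the operator, and the projection identity $I_H|_{S^1_0(\tri_H)}=\id$. Together with $I_H w_h=0$ (the defining condition of $W_h$), these will force $w_h|_T=0$ on every $T\in\tri_H$ with $T\subseteq\widetilde{\Omega}$, and the conclusion $w_h|_{\widetilde{\Omega}}=0$ then follows by taking unions.

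First I would extract cellwise locality of $I_H$ from the stability bound: if $v_1,v_2\in V$ agree on $\Omega_T$, then by linearity
\[
\|I_H(v_1-v_2)\|_{L^2(T)}\leq C\|v_1-v_2\|_{L^2(\Omega_T)}=0,
\]
so $(I_Hv)|_T$ depends only on $v|_{\Omega_T}$. Next, fix a coarse simplex $T\subseteq\widetilde{\Omega}$. By definition of $\widetilde{\Omega}$, every coarse neighbour of $T$ remains unrefined in $\tri_h$, so $\tri_h$ coincides with $\tri_H$ throughout $\Omega_T$, and in particular $w_h|_{\Omega_T}$ is $\tri_H$-piecewise affine. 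I would then construct a companion $v_H\in S^1_0(\tri_H)$ with $v_H=w_h$ on $\Omega_T$ by prescribing the $\tri_H$-nodal values as $v_H(z):=w_h(z)$ for coarse vertices $z\in\overline{\Omega_T}$ and $v_H(z):=0$ for the remaining coarse nodes. Since every vertex of a coarse simplex $K\subseteq\Omega_T$ lies in $\overline{\Omega_T}$, the two affine functions $v_H|_K$ and $w_h|_K$ agree at the vertices of $K$, hence on all of $K$; and the Dirichlet condition $v_H\in S^1_0(\tri_H)$ is automatic, because any boundary vertex in $\overline{\Omega_T}$ inherits the vanishing value from $w_h\in S^1_0(\tri_h)$.

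With this companion in hand, the chain
\[
w_h|_T \;=\; v_H|_T \;=\; (I_H v_H)|_T \;=\; (I_H w_h)|_T \;=\; 0
\]
closes the argument, invoking in turn the construction of $v_H$, the projection property on $S^1_0(\tri_H)$, the locality derived above, and the definition of $W_h$. The only truly subtle step is the first one, converting the local $L^2$~stability into a genuine cellwise locality statement for $I_H$; once that is in place, everything else is bookkeeping on coarse nodal values together with the projection identity guaranteed by the construction of $I_H$ in Subsection~\ref{ss:exampleIH}.
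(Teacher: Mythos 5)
Your argument is correct, and it takes a genuinely different route from the paper. The paper's proof is constructive: it exhibits the explicit functions $w_y=\lambda_y-I_H\lambda_y$ for the new fine nodes $y\in\mathcal{N}$, shows by a linear-independence and dimension-count argument that they form a basis of $W_h$, and then uses the local $L^2$~stability to confine the support of each $w_y$ to a two-layer coarse patch around $y$; since new nodes occur only in the refined region, every basis function (hence every $w_h\in W_h$) vanishes on $\widetilde{\Omega}$. You instead argue directly for an arbitrary $w_h\in W_h$: you convert the local $L^2$~stability into cellwise locality of $I_H$ (exactly the mechanism the paper also relies on), observe that $w_h$ is $\tri_H$-piecewise affine on the unrefined patch $\Omega_T$, build a coarse companion $v_H\in S^1_0(\tri_H)$ agreeing with $w_h$ there, and conclude via $w_h|_T=v_H|_T=(I_Hv_H)|_T=(I_Hw_h)|_T=0$. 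Both proofs use the projectivity of $I_H$ (the paper needs it already to see that $w_y\in W_h$ and for the dimension count), so you are not assuming anything beyond the paper's standing hypotheses. What each approach buys: the paper's version delivers the stronger structural by-product that $W_h$ admits a basis of locally supported functions indexed by the new nodes, which is precisely what one assembles in practice for the corrector problems; your version is shorter, avoids the basis and dimension bookkeeping, and cleanly isolates the two properties of $I_H$ that drive the result (locality and projectivity), which makes it somewhat more robust, e.g.\ under generalisations of the interpolation operator. The only step worth writing out with care is the construction of $v_H$ near $\partial\Omega$, which you already address correctly by noting that boundary nodal values inherited from $w_h$ vanish.
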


\begin{proof}
Let $\mathcal{N}$ denote the set of nodes in $\tri_h$ that are not in 
$\tri_H$ and define $w_y:=\lambda_y - I_H\lambda_y$ for $y\in \mathcal{N}$. 
We want to show that the functions $w_y$ for $y\in \mathcal{N}$ are 
linear independent.
Let $\alpha_y\in\R$ such that 
\begin{align*}
 \sum_{y\in\mathcal{N}} \alpha_y w_y=0.
\end{align*}
On the one hand, the definition of $w_y$ leads to
$\sum_{y\in\mathcal{N}} \alpha_y \lambda_y 
= \sum_{y\in\mathcal{N}}\alpha_y I_H\lambda_y\in S^1_0(\tri_H)$, 
i.e., the function $\sum_{y\in\mathcal{N}} \alpha_y \lambda_y $ is 
piecewise affine on the triangulation $\tri_H$.
On the other hand, the functions $\lambda_y$ vanish at all nodes in $\tri_H$.
This implies that the functions $w_y$ are 
linear independent. A dimension argument proves that they form a basis of $W_h$.
The local $L^2$~stability implies that $w_y$ has 
the local support $\{x\in\Omega\mid \exists T,T'\in \tri_H\text{ such that }
x\in T, y\in T'\text{ and }T\cap T'\neq \emptyset\}$. 
\end{proof}

Proposition~\ref{p:localcorrectors} implies that the number of additional non-zero 
entries in the mass and stiffness matrix depends only
on the number of triangles of $\tri_H$ that are refined in $\tri_h$.

\subsubsection{Sparsification on graded meshes}\label{sss:localisation}

In this subsection we consider an arbitrary refinement $\tri_h$ of $\tri_H$.
Given a coarse nodal basis function $\lambda_z$, it was shown 
in \cite{MalqvistPeterseim2014_localization} that 
$\cor\lambda_z$ decays exponentially fast outside of the support of 
$\lambda_z$ (see \cite{Peterseim:2015} for illustrations). This decay allows 
the truncation of the computational domain 
for~\eqref{e:corprob} to local subdomains of diameter $m H$, roughly, 
where $m$ denotes a new discretisation parameter, namely the localisation 
or oversampling parameter. The obvious way would be to simply replace the 
global domain $\Omega$ in the computation of $\cor\lambda_z$ with suitable 
neighbourhoods of the nodes $z$. This procedure was used 
in~\cite{MalqvistPeterseim2014_localization}. However, it turned out that 
it is advantageous to consider the following slightly more involved technique 
based on element 
correctors~\cite{HenningPeterseim2013,HenningMorgensternPeterseim2014}.
Define the $m$-th order patch 
\begin{align*}
 \Omega_{T,m}:=\bigcup\left\{ K\in\tri_H\left| 
     \begin{array}{l}
      \exists K_0,\dots,K_m\in \tri_H \text{ with }K_0=T, K_m=K \\ 
         \text{ and }K_j\cap K_{j+1}\neq\emptyset
         \text{ for all }j=0,\dots,m-1
     \end{array}
       \right.\right\}.
\end{align*}
We introduce corresponding truncated function spaces
\begin{align*}
  W_h(\Omega_{T,m})
    :=\{w_h\in W_h\mid \mathrm{supp}(w_h)\subseteq \Omega_{T,m}\}.
\end{align*}
Given any coarse nodal basis function $\lambda_z\in S^1_0(\tri_H)$, let 
$\cor_{T,m} \lambda_z\in W_h(\Omega_{T,m})$ solve the localised element 
problem 
\begin{align*}
  (\nabla w_h,\nabla \cor_{T,m} \lambda_z)_{L^2(\Omega)}
   = \int_T \nabla w_h\cdot \nabla \lambda_z
   \quad\text{for all }w_h\in W_h(\Omega_{T,m})
\end{align*}
and define $\cor_m \lambda_z:=\sum_{T\in\tri_H} \cor_{T,m} \lambda_z$.
Note that we impose homogeneous Dirichlet boundary conditions on the 
artificial boundary of the patch $\Omega_{T,m}$ which is well justified 
by the fast decay. 
Under the assumption that $\tri_h$ is shape regular and that $I_H$ is a local 
operator (as the one introduced in Subsection~\ref{ss:exampleIH})  
it is proved in 
\cite{HenningPeterseim2013,MalqvistPeterseim2014_localization,HenningMorgensternPeterseim2014} that this leads to 
the existence of constants $C>0$ and $\beta>0$ such that 
\begin{align}\label{e:decay}
  \|\nabla(\cor \lambda_z - \cor_m \lambda_z)\| 
     \leq C \exp(-\beta m) \|\nabla \lambda_z\|.
\end{align}
This justifies the utilisation of 
\begin{align*}
  \corv^{(m)}:=\mathrm{span}\{\lambda_z-\cor_m\lambda_z
    \mid z\text{ interior node of }\tri_H\}
\end{align*}
as an approximation to $\corv$. Due to the exponential decay \eqref{e:decay}, 
the choice of $m\approx\lvert\log(H)\rvert$ ensures that this perturbation does not 
affect the advantageous approximation properties of $V_H$. 

For the construction of the basis, $O(H^{-d})$ problems have to be solved. 
Each of these problems consists of $O((\log(H)H/h_\mathrm{min})^d)$ 
degrees of freedoms in the worst case, depending on the grading of the fine triangulation.
These costs are 
offline costs in the sense that the basis has to be constructed in the 
beginning only and does not depend on the number of time steps. 
It does only depend on the coarse and the fine mesh.
The non-zero entries in the mass and stiffness matrix amount to 
$O((2\log(H)/H)^{d})$.

\subsection{Diagonal preconditioning of the mass matrix}\label{ss:preconditioner}

This subsection proves that the inverse of the diagonal of the mass matrix is 
a suitable preconditioner for it. Although this is shown for $\corv$ with 
basis functions $\lambda_z-\cor\lambda_z$, the arguments and therefore 
also the result hold as well for the perturbed spaces $\corv^{(m)}$ 
of Subsection~\ref{sss:localisation} spanned by the local basis functions 
$\lambda_z-\cor_m\lambda_z$.

Define $D:=\mathrm{dim}(S^1_0(\tri_H))$. 
Let $(\Lambda_k)_{k=1,\dots,D}$ denote the basis of $S^1_0(\tri_H)$ 
consisting of the standard nodal basis functions. Then 
$(\lambda_k)_{k=1,\dots,D}$ with 
$\lambda_k=(1-\cor)\Lambda_k$ for $\cor$ from~\eqref{e:corprob} defines a 
basis of $\corv$.
Let $\mathcal{M}$ and $M$ denote the mass matrices with respect to 
$(\Lambda_k)_{k=1,\dots,D}$ and 
$(\lambda_k)_{k=1,\dots,D}$.
Let $x\in\R^{D}$ and let 
\begin{align*}
 U_H:=\sum_{k=1}^{D}x_k\Lambda_k\in S^1_0(\tri_H)
 \qquad\text{and}\qquad 
 \uH:= \sum_{k=1}^{D}x_k\lambda_k
=(1-\cor) U_H\in\corv
\end{align*}
denote the corresponding functions in $S^1_0(\tri_H)$ and 
$\corv$.
Then, 
\begin{align*}
  x^\top M x
    = \|\uH\|^2 = \|(1-\cor)U_H\|^2
    \leq (\|U_H\| + \|\cor U_H\|)^2.
\end{align*}
Since $\cor U_H\in W_h$, the approximation properties~\eqref{e:IHapproxstab}
and the inverse inequality~\eqref{e:inverseineq} imply
\begin{align*}
   \|U_H\| +  \|\cor U_H\|
     &\leq \|U_H\| +C_{I_H}^{(1)} H\|\nabla U_H\|
     \leq \left(1+ C_{I_H}^{(1)} H C_\mathrm{inv}(S^1_0(\tri_H))\right) \|U_H\|\\
     &= \left(1+C_{I_H}^{(1)} H C_\mathrm{inv}(S^1_0(\tri_H))\right) \sqrt{x^\top \mathcal{M} x}.
\end{align*}
On the other hand, since $U_H=I_H \uH$, the 
$L^2$~stability~\eqref{e:IHL2stab} leads to 
\begin{align*}
  \sqrt{x^\top \mathcal{M} x} = \|U_H\|
   \leq C_{I_H}^{(0)} \|\uH\| = C_{I_H}^{(0)}\, \sqrt{x^\top M x}.
\end{align*}
Given $A,B\in\R$, let $A\approx B$ abbreviate that there exist generic 
constants $C_1>0$, $C_2>0$ independent of the mesh size, such that 
$A\leq C_1 B\leq C_2 A$. Since $C_\mathrm{inv}(S^1_0(\tri_H))\approx H^{-1}$, the 
above result reads 
\begin{align*}
  x^\top M x \approx x^\top \mathcal{M} x.
\end{align*}
Since this result also holds for the unit vectors 
$e_j\in \R^{D}$, this implies 
\begin{align*}
  x^\top \mathrm{diag}(\mathcal{M}) x
    = \sum_{j=1}^{D} x_j^2 e_j^\top \mathcal{M} e_j
    \approx \sum_{j=1}^{D} x_j^2 e_j^\top M e_j
    = x^\top\mathrm{diag}(M) x.
\end{align*}
Since $x^\top \mathrm{diag}(\mathcal{M}) x \approx x^\top \mathcal{M} x$ 
\cite{Wathen01101987}, it follows 
\begin{align*}
  x^\top\mathrm{diag}(M) x
  \approx x^\top\mathrm{diag}(\mathcal{M}) x
  \approx  x^\top \mathcal{M} x
  \approx x^\top M x.
\end{align*}
Therefore, $(\mathrm{diag}(M))^{-1}$ is a suitable preconditioner for 
$M$ and the application of $M^{-1}$ may be replaced with a few iterations of 
the preconditioned conjugate gradient method.

\subsection{Generalisation to other FEMs}\label{ss:dG}

The reduction approach is not at all restricted to linear conforming finite 
elements. The generalisation to many non-standard schemes is possible. 
If one considers, e.g., a discontinuous Galerkin discretisation 
instead of a FEM approximation,
then $I_H=\Pi_\mathrm{dG}$ the $L^2$~projection onto the discontinuous 
Galerkin space satisfies the approximation properties~\eqref{e:IHapproxstab}
and the $L^2$~stability~\eqref{e:IHL2stab} with $\|\nabla \bullet\|$
replaced by the dG norm 
\begin{align*} 
\sqrt{\sum_{T\in\tri_h}\|\nabla \bullet\|_{L^2(T)}^2 
+\sum_{F\in\mathcal{F}} \frac{\sigma}{\mathrm{diam}(F)} \|[\bullet]_F\|_{L^2(F)}^2},
\end{align*}
where $\mathcal{F}$ denotes the set of hyper-surfaces of $\tri_h$ 
(e.g., the set of edges for $d=2$ and the set of faces for $d=3$), $[\bullet]_F$ 
denotes the jump across a hyper-surfaces $F$ and $\sigma$ is some penalty parameter.
Furthermore, Lemma~\ref{l:approxinverse} holds equally with this choice of 
quasi-interpolation. 
\new{In the context of numerical homogenisation, the reduced space for 
discontinuous Galerkin discretisations was utilised  
in~\cite{Elfverson.Georgoulis.Mlqvist.ea:2012}.}


\new{Higher order elements are also possible in principal, if $\Delta u$ is 
sufficiently smooth. The design of
a suitable interpolation operator $I_H$ with additional properties 
is crucial: 
Additional orthogonality properties
have to ensure that the term $(-\Delta u,e_h)_{L^2(\Omega)}$ with 
$e_h\in \mathrm{ker}(I_H\vert_{S^1_0(\tri_h)})$ in the proof 
of Lemma~\ref{l:approxinverse} converges with the correct rate.}

\section{Numerical experiment}\label{s:numerics}

\begin{figure}
 \begin{center}
   \hspace{1cm}
   \includegraphics[width=0.35\textwidth]{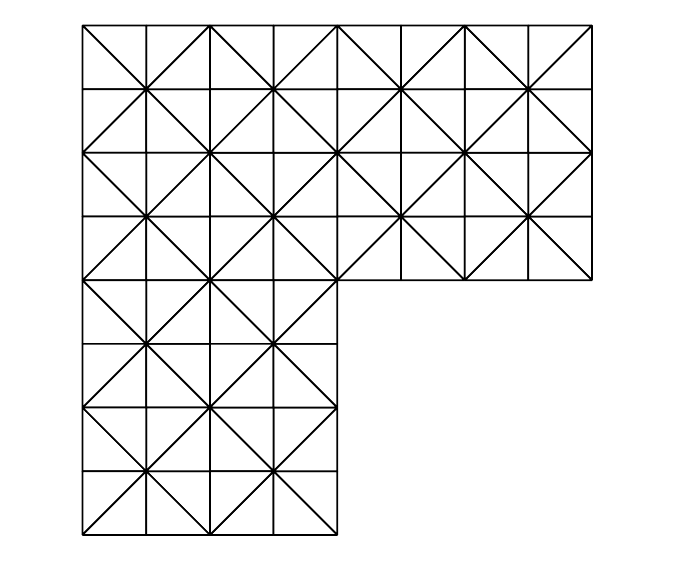}
   \hfill 
   \includegraphics[width=0.35\textwidth]{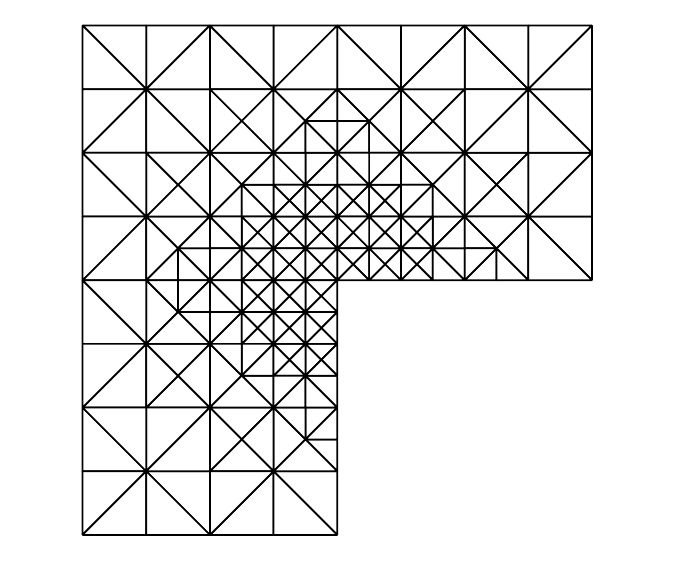}
   \hspace{1cm}
 \end{center}
 \caption{\label{f:triang}Triangulations $\tri_{H,1}$ and $\tri_{h,1}$ for the 
 numerical experiment from Section~\ref{s:numerics}.}
\end{figure}

In this example we consider the wave equation~\eqref{e:strongform} on 
the spatial L-shaped domain 
$\Omega:=(-1,1)^2\setminus ([0,1]\times [-1,0])\subseteq\R^2$ for the 
time interval $[0,0.5]$ with 
inhomogeneous Dirichlet boundary conditions $u\vert_{\partial\Omega}$  
and right-hand side $f$ and initial conditions $u(0)$ and 
$\dot{u}(0)$ given by the exact singular solution 
\begin{align*}
    u(t,x)=\sin(\pi t)\, (r(x))^{2/3}\,\sin(2k\theta(x)/3)
\end{align*}
in polar coordinates $(r,\theta)$. 
The discretisation~\eqref{e:defstabFEM2} can naturally be generalised to 
this case of inhomogeneous Dirichlet boundary conditions.

We consider a sequence of uniform triangulations 
$(\tri_{H,\ell})_{\ell=1,\dots,10}$, such that $\tri_{H,\ell+1}$ is created 
from $\tri_{H,\ell}$ by a proper bisection of every triangle (i.e., the 
longest edge in a triangle is bisected).
The sequence $(\tri_{h,\ell})_{\ell=1,\dots,10}$ consists of triangulations 
such that $\tri_{h,\ell}$ is a refinement of $\tri_{H,\ell}$ created 
similar as in the algorithm \texttt{threshold} from~\cite{GaspozMorin2008}, i.e., 
$\tri_{h,\ell}$ is graded towards the re-entrant corner $(0,0)$. 
The first triangulations $\tri_{H,1}$ and $\tri_{h,1}$ are depicted in 
Figure~\ref{f:triang}.
These triangulations define the finite element spaces $S^1_0(\tri_H)$, 
$S^1_0(\tri_h)$. Let $I_{H,\ell}:=J_1 \circ \Pi_1^{\tri_{H,\ell}}$ as in 
Subsection~\ref{ss:exampleIH}. This defines $\corv$.
The time step size $\Delta t$ for the standard leapfrog FEM on $S^1_0(\tri_H)$ 
and the reduced FEM from~\eqref{e:defstabFEM2}
(resp.\ $\Delta t_h$ for the standard leapfrog FEM on $S^1_0(\tri_h)$) is defined 
by 
\begin{align}\label{e:numdeftimestepsize}
  \Delta t :=\sqrt{2}/ C_\mathrm{inv}(S^1_0(\tri_H))
  \qquad (\text{resp.\ }\;
  \Delta t_h := \sqrt{2} / C_\mathrm{inv}(S^1_0(\tri_h))).
\end{align}
These time step sizes are summarised in Table~\ref{tab:timestepsizes} for the 
triangulations $(\tri_{H,\ell})_{\ell=1,\dots,10}$ and 
$(\tri_{h,\ell})_{\ell=1,\dots,10}$. 
\begin{table}
\begin{center}
{\small
 \begin{tabular}{l|ccccccccccc}
   $\ell$ & 1&2&3&4&5&6&7&8&9&10&11\\
   \hline 
   $\Delta t$&1.9e-2 &1.3e-2 &9.8e-3 &6.9e-3 &4.9e-3 &3.4e-3 
         &2.4e-3 & 1.7e-3 &1.2e-3 &8.6e-4 & 6.1e-4\\
   $\Delta t_h$& 6.5e-3& 3.4e-3& 1.6e-3 & 5.7e-4 & 4.0e-4 & 2.1e-4 & 1.0e-4 & 
       3.7e-5 & 2.6e-5 
 \end{tabular}
 }
\end{center}
\caption{\label{tab:timestepsizes}Time-step sizes for uniform triangulations 
        $\tri_H$ and refined triangulations $\tri_h$ defined 
        by~\eqref{e:numdeftimestepsize}.
        The small time-step sizes $\Delta t_h$ limit the feasible computations
        of a solution of the standard leapfrog FEM on $S^1_0(\tri_h)$
        to the first nine levels.}
\end{table}
While $\Delta t$ is only moderately small for all considered triangulations,
the fine time step-size $\Delta t_h$ decreases with higher rate, such that 
50 times more time steps are needed for the leapfrog on $S^1_0(\tri_h)$
compared with $S^1_0(\tri_H)$ for $\ell=9$.
The approximation $\corv^{(m)}$ of $\corv$ from Subsection~\ref{sss:localisation} 
is employed in the numerical computations with $m=\lceil-0.5\log_2(H)\rceil$, 
which implies $1\leq m \leq 3$ for the performed computations.
The inversions of the mass matrices $M$ are performed with the preconditioned 
conjugate gradients method with preconditioner $\mathrm{diag}(M)^{-1}$. 
\begin{figure}
 \begin{center}
   \includegraphics[width=0.9\textwidth]{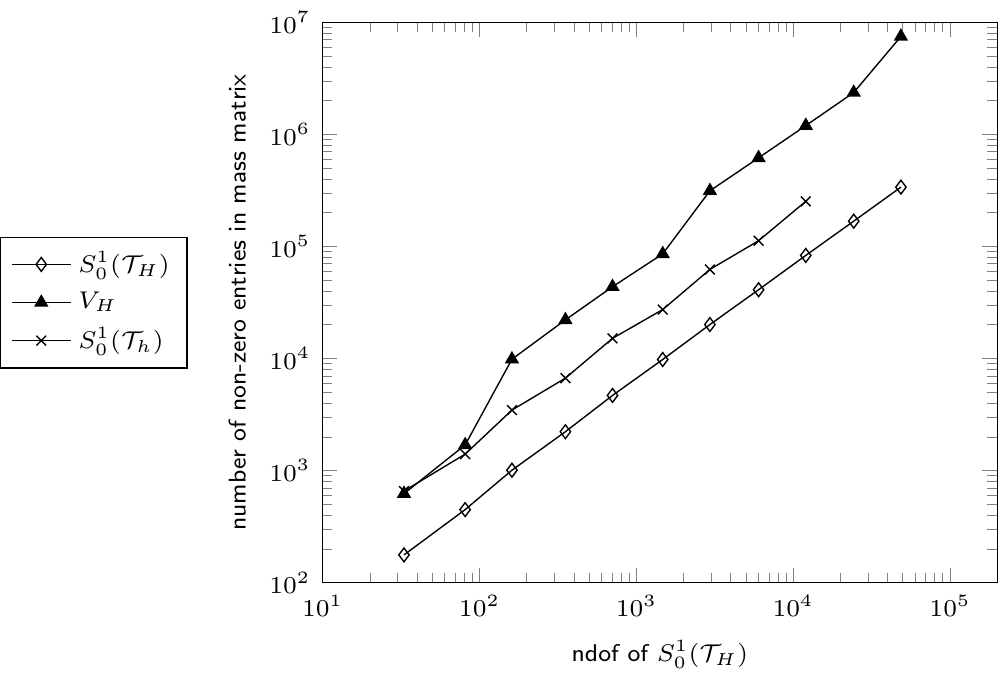}
 \end{center}
 \caption{\label{f:nonzero}Number of non-zero entries in the mass matrices.}
\end{figure}
The number of non-zero entries in the mass matrices are plotted in 
Figure~\ref{f:nonzero} and serve as a measure of the complexity.
The errors 
\begin{align}\label{e:numErrorDef}
  \sum_{k=1}^{N} \Delta t \left\|\nabla(u(k\Delta t)-U^k)\right\|
\end{align}
for the discrete solution $(U^k)_{k=1,\dots,N}$ of the reduced FEM 
of~\eqref{e:defstabFEM2}, of the standard leapfrog FEM on $S^1_0(\tri_H)$ (i.e., 
\eqref{e:defstabFEM2} with $\corv$ replaced by the coarse FEM space $S^1_0(\tri_H)$), 
and of the standard leapfrog FEM on $S^1_0(\tri_h)$ (with $\Delta t$ replaced with the 
fine time step size $\Delta t_h$ and $N$ replaced by 
$\lceil T/\Delta t_h\rceil$) serve as approximations for the error 
in $L^2(0,T;H^1_0(\Omega))$ and are plotted in Figure~\ref{f:numErrorsexL}
against the number of degrees of freedom (ndof) in $S^1_0(\tri_H)$ (which 
equals the number of degrees of freedom in $\corv$).
\begin{figure}
  \begin{center}
   \includegraphics[width=\textwidth]{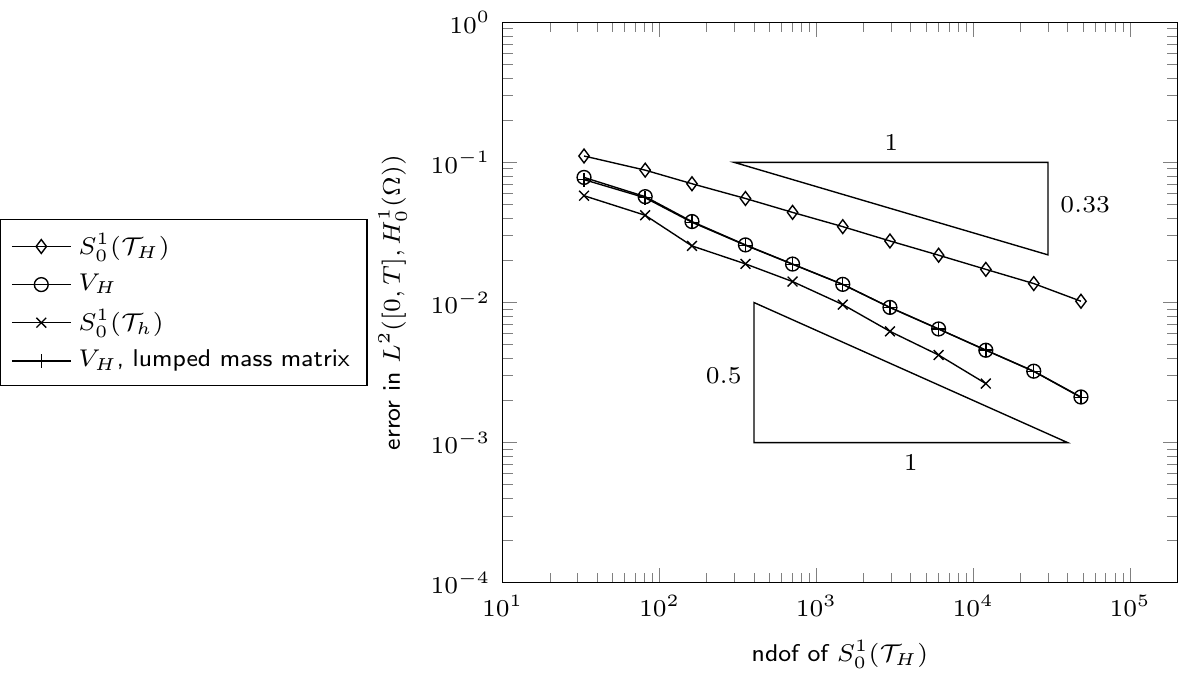}
  \end{center}
  \caption{\label{f:numErrorsexL}Errors~\eqref{e:numErrorDef} for the 
      example from Section~\ref{s:numerics}}
\end{figure}
The error for the standard leapfrog FEM for $S^1_0(\tri_H)$, i.e., on uniform 
triangulations, shows a suboptimal 
convergence rate of $\mathrm{ndof}^{1/3}\approx H^{2/3}$, while the 
approximation with \eqref{e:defstabFEM2} and the standard leapfrog FEM on 
$S^1_0(\tri_h)$ yield the optimal convergence rate of $\mathrm{ndof}^{1/2}$
as predicted by Theorem~\ref{t:totalerrorest}.
\new{Figure~\ref{f:numErrorsexL} also contains the errors of the leapfrog 
FEM for $\corv$ with the mass matrix replaced by the lumped mass matrix, i.e., the 
diagonal matrix whose entry $M_{jj}$ equals the sum of the row entries 
$(M_{jk})_{k=1,\dots,\mathrm{dim}(\corv)}$. The error shows the same behaviour 
as without mass lumping.}

\smallskip
Due to the small time step sizes, 
implicit schemes were previously employed for numerical experiments 
on adaptive meshes \cite{MuellerSchwab2015}, which require 
the expensive inversion of the stiffness matrix in every time step.
The reduced space $\corv$ overcomes the restrictive CFL condition and 
turns the leapfrog into a practicable scheme: 
At the expense of a moderately increased complexity,
the optimal convergence rate is recovered, but with the same time step sizes 
as for uniform meshes.
%


\end{document}